\newtheorem{theorem}{Theorem}[section]
\newtheorem*{Acknowledgement}{\textnormal{\textbf{Acknowledgement}}}
\newtheorem{corollary}[theorem]{Corollary}
\theoremstyle{definition}
\newtheorem{definition}[theorem]{Definition}
\newtheorem{example}[theorem]{Example}
\newtheorem{Open Prob}[theorem]{Open Problem}
\theoremstyle{remark}
\newtheorem{remark}[theorem]{Remark}
\numberwithin{equation}{section}
\def\DJ{\leavevmode\setbox0=\hbox{D}\kern0pt\rlap{\kern.04em\raise.188\ht0\hbox{-}}D}
\begin{document}
\title[TOPOLOGICAL DEVELOPMENTS OF $\mathcal{F}$-METRIC SPACES]{TOPOLOGICAL DEVELOPMENTS OF $\mathcal{F}$-METRIC SPACES}
\author[A.\ Bera, L.K.\ Dey, H.\ Garai, A.\ Chanda]
{Ashis Bera$^{1}$,  Lakshmi Kanta Dey$^{2}$, Hiranmoy Garai$^{3}$, Ankush Chanda$^{4}$}

%\thanks{}
\address{{$^{1}$\,} Ashis Bera,
                    Department of Mathematics,
                    National Institute of Technology
                    Durgapur, India.}
                    \email{beraashis.math@gmail.com}
\address{{$^{2}$\,} Lakshmi Kanta Dey,
                    Department of Mathematics,
                    National Institute of Technology
                    Durgapur, India.}
                    \email{lakshmikdey@yahoo.co.in}
\address{{$^{3}$\,} Hiranmoy Garai,
                    Department of Mathematics,
                    National Institute of Technology
                    Durgapur, India.}
                    \email{hiran.garai24@gmail.com}          
                    
\address{{$^{4}$\,} Ankush Chanda,
                    Department of Mathematics,
                    National Institute of Technology
                    Durgapur, India.}
                    \email{ankushchanda8@gmail.com}

\keywords{Altering distance functions, $\mathcal{F}$-metric spaces, second countable, Hausdorff property, Kannan-type contractive mappings, orbitally  continuity.\\
\indent 2010 {\it Mathematics Subject Classification}.  $47$H$10$, $54$H$25$.}

\begin{abstract}
In this manuscript, we claim that the newly introduced $\mathcal{F}$-metric spaces are Hausdorff and also first countable. Moreover, we assert that every separable $\mathcal{F}$-metric space is second countable. Additionally, we acquire some interesting fixed point results concerning altering distance functions for  contractive-type mappings and Kannan-type contractive mappings in this exciting context. However, most of the findings are well-furnished by several non-trivial numerical examples. Finally, we raise an open problem regarding the metrizability of such kind of spaces.
\end{abstract}
 
\maketitle

\setcounter{page}{1}

\centerline{}

\centerline{}

\section{\bf Introduction}
\baselineskip .55 cm
As a generalization of Euclidean geometry and a common setting for continuous functions, topology of metric spaces is one of the most fascinating and instrumental branches of research in contemporary mathematics. This fact has prompted many mathematicians to deal with the topology induced by a metric on a non-empty set in plenty of research articles. Therefore the topology of $b$-metric spaces, dislocated metric spaces and several other abstract spaces are thoroughly investigated and also improved by several authors (see \cite{D3,M2,TFAS,NA1,HS} and references therein). 

On the other hand, metric fixed point theory appears as one of the most salient means to work out various research ventures in non-linear functional analysis and a variety of other fields in science and technology. It all emerged with the illustrious Banach contraction principal, due to Banach \cite{B1}, in 1922 and subsequently, plenty of results appeared which complement, extend and obviously improve the pioneer one \cite{CMDK,DD,GRRS,HS,R5,S3,G,C,JS,GDMR}.

Right through the years, mathematicians got involved with improving the underlying metric structure of the previous result and in a very recent article, Jleli and Samet \cite{JS1} proposed another interesting framework to work with. The authors made use of a certain class of functions to coin the notion of an $\mathcal{F}$-metric space. We begin with the collection of the auxiliary functions.

Let $\mathcal{F}$ be the set of functions $f:[0,\infty)\rightarrow \mathbb{R}$ satisfying the following conditions:

($\mathcal{F}_1$) $f$ is non-decreasing, i.e., $ 0<s<t\Rightarrow f(s)\leq f(t)$.

($\mathcal{F}_2$) For every sequence $\{t_n\}\subseteq (0,+\infty)$, we have
$$\lim_{x\to +\infty}t_n=0 \Longleftrightarrow \lim_{x\to +\infty}f(t_n)=-\infty.$$

Utilizing such functions the authors generalized the concept of usual metric spaces and originated the notion of $\mathcal{F}$-metric spaces as follows:
\begin{definition} \cite{JS1} \label{D1}
Let $X$ be a non-empty set, and let $d:X\times X\rightarrow [0,\infty)$ be a given mapping. Suppose that there exists $(f,\alpha)\in \mathcal{F}\times [0,\infty)$ such that
\begin{enumerate}
\item[(D1)] $(x,y)\in X \times X,~~ d(x,y)=0\Longleftrightarrow x=y$.
\item[(D2)] $d(x,y)=d(y,x),$ for all $(x,y)\in X \times X$.
\item[(D3)] For every $(x,y)\in X\times X$, for each $N\in \mathbb{N}, N\geq2$, and for every $(u_i)_{i=1}^{N}\subseteq X $ with $(u_1,u_N)=(x,y)$, we have
$$d(x,y)>0 \Longrightarrow f(d(x,y))\leq f\Big(\sum_{i=1}^{N-1} d(u_i, u_{i+1})\Big)+ \alpha.$$
\end{enumerate}
Then $d$ is said to be an $\mathcal{F}$-metric on $X$, and the pair $(X,d)$ is said to be an $\mathcal{F}$-metric space.
\end{definition}
We observe that any metric on $X$ is an $\mathcal{F}$-metric, but the converse is not true, which is given in \cite[\, Example 2.1]{JS1}. Besides, the succeeding one is an example of rectangular $b$-metric which is not an $\mathcal{F}$-metric. This implies that the set of rectangular $b$-metric spaces is not contained in the collection of $\mathcal{F}$-metric spaces.
\begin{example}
Let $X=\mathbb{N}$ and define a map $d:X\times X\rightarrow [0,\infty)$ such that $d(x,y)=d(y,x)$ for all $x,y\in X$ and
\[d(x,y)=
\begin{cases}
0,\; x=y;\\
15, x=1, y=20;\\
1, \;x\in\{1,20\}, \; \text{and}\; y\in\{25\};\\
2, \;x\in\{1,20,25\}, \; \text{and}\; y\in\{30\};\\
3, \;x\in\{1,20,30\}, y\in\{35\}; \\
\frac{3}{n^2}, \;x\; \text{or}\; y\notin\{1,20,25,30\} \; \text{and} \;x\neq y.
\end{cases}
\]
Then it can be easily verified that $(X,d)$ is a rectangular $b$-metric space. Suppose that there exists $(f,\alpha)\in \mathcal{F}\times [0,\infty)$ such that $d$ satisfies (D3). Let $n\in \mathbb{N}$ and $u_i=\frac{i}{(n+1)^2}$, $i=1,2,\cdots,n,~~\mbox{with}~~ i\neq k(n+1)^2$. Now, by (D3) we get
\begin{align*}
f(d(x,y))\leq f \Big(\sum_{i=1}^{N-1}d(u_i,u_{i+1})\Big)+\alpha \; \text{with} \; (u_1,u_N)=(x,y) \; \text{and} \; \alpha>0.
\end{align*}
Therefore for some $N\in \mathbb{N}$,
\begin{align*}
f(d(1,20))\leq f(d(1,u_1)+d(u_1,u_2)+...+d(u_{N-1},20))+ \alpha,
\end{align*}
which implies
\begin{align*}
f(15)&\leq f\Big(\frac{3}{n^2}+\frac{3}{n^2}+...+\frac{3}{n^2}\Big)+ \alpha\\
&\leq f\Big(\frac{3}{n}\Big)+ \alpha\rightarrow -\infty 
\end{align*}
as $\displaystyle{\lim_{n\rightarrow \infty}}f\Big(\frac{3}{n}\Big)=-\infty$, which is a contradiction.
\end{example}
Further, the notions of completeness, convergence and Cauchy sequences in this framework along with some other terminologies can be found in \cite{JS1}.

In this literature, we assert a couple of topological observations concerning the newly introduced $\mathcal{F}$-metric spaces. In fact, being a vast generalization of usual metric spaces, such kind of spaces still hold some beautiful topological properties like Hausdorff and also first countability. However, we also confirm that whenever an $\mathcal{F}$-metric space is separable, it is actually a  second countable one. On the other hand, we also enquire into a few exciting fixed point results involving altering distance functions in the later half of this article. Finally, we pose an interesting open problem and construct several non-trivial examples to validate the obtained theorems.
\section{\bf Results on topology of \texorpdfstring{$\mathcal{F}$}{}-metric spaces}
In this section, we deal with the topological developments of $\mathcal{F}$-metric spaces which is equipped with the $\mathcal{F}$-metric topology $\tau_{\mathcal{F}}$. First of all, we attest that such metric spaces are Hausdorff.
\begin{theorem}
Every $\mathcal{F}$-metric space $(X,d)$ is Hausdorff.
\end{theorem}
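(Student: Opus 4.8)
The plan is to run the classical ``separate distinct points by half their distance'' argument, transported through the auxiliary function $f$, with one adjustment forced by the additive constant $\alpha$ in (D3). Recall that a set $U$ belongs to the $\mathcal{F}$-metric topology $\tau_{\mathcal{F}}$ exactly when every $x\in U$ admits some $r>0$ with $B(x,r):=\{z\in X:d(x,z)<r\}\subseteq U$. So fix $x\ne y$ in $X$; by (D1) we have $r:=d(x,y)>0$, and the goal is to exhibit disjoint $\tau_{\mathcal{F}}$-open sets containing $x$ and $y$ respectively.

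The analytic input I would extract first is that ($\mathcal{F}_1$) and ($\mathcal{F}_2$) together give $f(t)\to-\infty$ as $t\downarrow 0$ in the strong sense that for each $M\in\mathbb{R}$ there is $\eta>0$ with $f(t)<M$ whenever $0<t<\eta$ --- otherwise one could pick $t_n\in(0,1/n)$ with $f(t_n)\ge M$, contradicting ($\mathcal{F}_2$). Using this with $M=f(r)-\alpha$ produces a radius $\delta>0$ such that $f(2\delta)+\alpha<f(r)$. I then claim $B(x,\delta)\cap B(y,\delta)=\emptyset$: if some $z$ lay in both, then $d(x,z)<\delta$ and, by (D2), $d(z,y)<\delta$, while $d(x,z)+d(z,y)>0$ since $x\ne y$; applying (D3) with $N=3$ to the chain $(u_1,u_2,u_3)=(x,z,y)$ --- permissible because $d(x,y)>0$ --- and then ($\mathcal{F}_1$) yields
\[
f(r)=f(d(x,y))\le f\bigl(d(x,z)+d(z,y)\bigr)+\alpha\le f(2\delta)+\alpha<f(r),
\]
a contradiction.

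The last step upgrades the disjoint balls to disjoint \emph{open} sets, and this is the only place I expect genuine care to be needed: the constant $\alpha$ prevents balls from nesting the way they do under a true triangle inequality, so a ball $B(a,s)$ need not itself lie in $\tau_{\mathcal{F}}$ and cannot simply be taken as the separating neighbourhood. I would get around this by passing to $\tau_{\mathcal{F}}$-interiors, the key auxiliary fact being that for every $A\subseteq X$ the set $A^{\circ}:=\{w\in A:B(w,\varepsilon_w)\subseteq A\text{ for some }\varepsilon_w>0\}$ is $\tau_{\mathcal{F}}$-open --- which is where the decay of $f$ near $0$ re-enters, to absorb the $\alpha$ when two small balls are composed through (D3) with $N=3$ --- so that $A^{\circ}$ is precisely the $\tau_{\mathcal{F}}$-interior of $A$. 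Granting this, since each ball contains a ball about its own centre we get $x\in B(x,\delta)^{\circ}$ and $y\in B(y,\delta)^{\circ}$, whereas $B(x,\delta)^{\circ}\cap B(y,\delta)^{\circ}\subseteq B(x,\delta)\cap B(y,\delta)=\emptyset$; hence $x$ and $y$ have disjoint open neighbourhoods and $(X,d)$ is Hausdorff. (If in the reference's formulation balls are already known to be $\tau_{\mathcal{F}}$-open, this last paragraph shortens to taking $U=B(x,\delta)$, $V=B(y,\delta)$ directly.)
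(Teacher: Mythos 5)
Your proof is correct and follows essentially the same route as the paper: both arguments separate $x$ and $y$ by balls of a common small radius and derive a contradiction from (D3) together with the fact that $f(t)\to-\infty$ as $t\downarrow 0$ (the paper phrases this via the sequence $a_n=d(x,y)/n$ and a proof by contradiction over all $n$, whereas you extract the radius $\delta$ directly from the quantitative form of ($\mathcal{F}_2$); these are interchangeable). The one substantive difference is your final paragraph: the paper stops at ``the balls are disjoint, hence we are done,'' silently treating $B(x,\frac{a_n}{2})$ and $B(y,\frac{a_n}{2})$ as open, while you correctly observe that in $\tau_{\mathcal{F}}$ a ball need not be open because of the additive constant $\alpha$, and you repair this by passing to the sets of interior points, whose openness again uses the decay of $f$ near $0$ to absorb $\alpha$ in a two-step chain. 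This extra step is sound (it is essentially the Jleli--Samet argument that each ball is a neighbourhood of its centre) and makes your write-up more complete than the paper's at exactly the point where the paper is glossing over a detail.
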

\begin{proof}
Let $(X,d)$ be an $\mathcal{F}$-metric space, so there exists $(f,\alpha)\in \mathcal{F}\times [0,\infty)$ satisfying the conditions $(D1$-$D3)$ of Definition \ref{D1}. Let $x,y$ be two arbitrary points in $X$ with $x\neq y$, and take $a_n=\frac{d(x,y)}{n}$. Then $\{a_n\}$ is a sequence in $(0,\infty)$ and $a_n\rightarrow 0 $ as $n\rightarrow \infty$. So by $\mathcal{F}_2$, we have, $$f(a_n) \to -\infty~~\mbox{as}~~ n \to \infty.$$
We claim that $B(x,\frac{a_n}{2}) \cap B(y,\frac{a_n}{2}) = \emptyset$ for some $n \in \mathbb{N}$. Suppose to the contrary that $B(x,\frac{a_n}{2}) \cap B(y,\frac{a_n}{2}) \neq \emptyset$. Then we can find a sequence $\{z_n\}$ in $X$ such that $z_n \in B(x,\frac{a_n}{2}) \cap B(y,\frac{a_n}{2})$ for all $n \in \mathbb{N}$. Therefore, $$d(x,z_n)<\frac{a_n}{2}\;\mbox{and} \;d(y,z_n)<\frac{a_n}{2}.$$
Since, $d(x,y)>0$, so using (D3), we get,
\begin{align*}
 f(d(x,y))&\leq f(d(x,z_n)+d(z_n,y))+\alpha\\
&\leq f\Big(\frac{a_n}{2}+\frac{a_n}{2}\Big)+\alpha\\
&= f(a_n) +\alpha.
\end{align*}
Taking limit as $n \to \infty$ in both sides of above equation we get $$f(d(x,y))\leq -\infty,$$ which is a contradiction.

%Let, if possible $B_{\frac{a_n}{2}}(x)\cap B_{\frac{a_n}{2}}(y)\neq \emptyset$, then there is 
%\begin{align*}
%&z_n\in B_{\frac{a_n}{2}}(x)\cap B_{\frac{a_n}{2}}(y)\\
%&\Rightarrow z_n\in B_{\frac{a_n}{2}}(x)\; \mbox{and}\; z_n\in B_{\frac{a_n}{2}}(y)\\
%&\Rightarrow d(x,z_n)<\frac{a_n}{2}\;\mbox{and} \;d(y,z_n)<\frac{a_n}{2}.
%%\end{align*}
%Then by using (D3), we have
%\begin{align*}
%d(x,y)>0\Rightarrow f(d(x,y))&\leq f(d(x,z_n)+d(z_n,y))+\alpha\\
%&<f(\frac{a_n}{2}+\frac{a_n}{2})+\alpha.
%\end{align*}
%By using $(\mathcal{F}_2)$, we obtain $$f(d(x,y))<-\infty,$$ which is a contradiction.

Therefore, $B(x,\frac{a_n}{2}) \cap B(y,\frac{a_n}{2})=\emptyset$ for some $n \in \mathbb{N}$. Hence we are done.
\end{proof}
\begin{remark}
It is worthy to mention that the Hausdorff property is a sufficient condition to claim the uniqueness of a limit for a  convergent sequence. Therefore, this property holds good for every $\mathcal{F}$-metric space also.
\end{remark}
Now we study the other separation axiom in the following result.
\begin{theorem}
Every $\mathcal{F}$-metric space $(X,d)$ is first countable.
\end{theorem}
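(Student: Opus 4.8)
The plan is to exhibit, at an arbitrary point $x\in X$, an explicit countable local base, and the obvious candidate is the family of balls with reciprocal-integer radii,
\[
\mathcal{B}_x=\Big\{\,B\big(x,\tfrac1n\big):n\in\mathbb{N}\,\Big\}.
\]
Recall that $U\in\tau_{\mathcal{F}}$ means that for every $x\in U$ there is $r>0$ with $B(x,r)\subseteq U$. Since $\mathcal{B}_x$ is plainly countable, it would suffice to verify the two requirements for a local base at $x$: (i) each $B(x,\tfrac1n)$ is a $\tau_{\mathcal{F}}$-neighbourhood of $x$, and (ii) every $\tau_{\mathcal{F}}$-open set containing $x$ contains some member of $\mathcal{B}_x$. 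Granting both, (i) together with (ii) say precisely that $\mathcal{B}_x$ is a countable local base at $x$, and since $x$ is arbitrary the space is first countable.

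Requirement (ii) is the routine half, and I would dispose of it first. If $x\in U$ with $U\in\tau_{\mathcal{F}}$, pick $r>0$ with $B(x,r)\subseteq U$; by the Archimedean property choose $n\in\mathbb{N}$ with $\tfrac1n<r$; then $d(x,y)<\tfrac1n<r$ gives the monotonicity of balls $B(x,\tfrac1n)\subseteq B(x,r)\subseteq U$, as wanted. No use of (D3) is needed for this step.

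The delicate point, and the one I expect to be the real obstacle, is requirement (i): showing that an open ball is a neighbourhood of its centre, i.e.\ that $x$ lies in the $\tau_{\mathcal{F}}$-interior of $B(x,\tfrac1n)$. In an ordinary metric space this is immediate from the triangle inequality, but here (D3) only provides $f(d(x,z))\le f\big(d(x,y)+d(y,z)\big)+\alpha$, and the auxiliary function $f$ together with the additive constant $\alpha$ block the usual estimate that $B(y,\rho)\subseteq B(x,\tfrac1n)$ for $\rho=\tfrac1n-d(x,y)$. The clean way around this is to invoke the basic structural fact from the theory of $\mathcal{F}$-metric spaces in \cite{JS1}, namely that the collection $\{B(x,r):x\in X,\ r>0\}$ is a base for $\tau_{\mathcal{F}}$; in particular every open ball belongs to $\tau_{\mathcal{F}}$ and hence is a neighbourhood of each of its points, which settles (i). If instead one wants a self-contained derivation of (i), the alternative route is a contradiction argument in the spirit of the Hausdorff theorem above — assume $x$ is not interior to $B(x,\tfrac1n)$, produce a sequence witnessing the failure, and push it through (D3) and ($\mathcal{F}_2$) — but the bookkeeping forced by the constant $\alpha$ makes this noticeably more delicate, so leaning on the established base property of the open balls is the preferable path.
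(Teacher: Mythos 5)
Your proposal is correct and takes essentially the same route as the paper: the countable family $\{B(x,\frac{1}{n}):n\in\mathbb{N}\}$ as the candidate local base, with the Archimedean property supplying the refinement step. The only difference is that you explicitly address why each $B(x,\frac{1}{n})$ is a $\tau_{\mathcal{F}}$-neighbourhood of $x$ (by invoking the result of \cite{JS1} that open balls form a base for $\tau_{\mathcal{F}}$), a point the paper's proof simply asserts without comment.
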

\begin{proof}
Let $x \in X$ be arbitrary and for each $x\in X$, the family $$\beta=\{B\Big(x,\frac{1}{n}\Big):n\in \mathbb{N}\}$$ is a countable set of open neighborhoods of $x$. Let $U$ be an open neighborhood of $x$. Then by the definition of an $\mathcal{F}$-open set, $B(x,r)\subseteq U$, where $r>0$. By the Archimedean property, there exists $n \in \mathbb{N}$ such that $0<\frac{1}{n}<r$ and therefore we have $$B\Big(x,\frac{1}{n}\Big)\subseteq B(x,r)\subseteq U.$$

Hence $\beta$ is a local basis at $x$. Hence any $\mathcal{F}$-metric space is first countable.
\end{proof}
Now we know that a set having a countable dense subset is said to be separable. In the following result, we show that the aforementioned property is equivalent to that of second countability, in case of $\mathcal{F}$-metric spaces also.
\begin{theorem}
Every separable $\mathcal{F}$-metric space is second countable.
\end{theorem}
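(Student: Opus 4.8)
The plan is to exhibit an explicit countable basis for the $\mathcal{F}$-metric topology $\tau_{\mathcal{F}}$. Fix a countable dense subset $D=\{x_k:k\in\mathbb{N}\}$ of $X$ and consider the family
\[
\mathcal{B}=\Big\{B\Big(x_k,\tfrac{1}{m}\Big):k,m\in\mathbb{N}\Big\},
\]
which is evidently countable (and, since open balls are $\tau_{\mathcal{F}}$-open, consists of open sets). It then suffices to prove that $\mathcal{B}$ is a basis for $\tau_{\mathcal{F}}$, i.e.\ that for every $\tau_{\mathcal{F}}$-open set $U$ and every $x\in U$ there exist $k,m\in\mathbb{N}$ with $x\in B\big(x_k,\tfrac{1}{m}\big)\subseteq U$.

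So I would fix such a $U$ and $x$; by the definition of a $\tau_{\mathcal{F}}$-open set there is $r>0$ with $B(x,r)\subseteq U$. The key step is to choose the auxiliary radius $\tfrac{1}{m}$ small enough that a ball of radius $\tfrac{1}{m}$ centred at a point which is $\tfrac{1}{m}$-close to $x$ is still absorbed by $B(x,r)$. Since $\tfrac{2}{m}\to 0$ as $m\to\infty$, property $(\mathcal{F}_2)$ gives $f\big(\tfrac{2}{m}\big)\to-\infty$, so I may fix $m\in\mathbb{N}$ with $f\big(\tfrac{2}{m}\big)+\alpha<f(r)$. Then, using that $D$ is dense (and that open balls are $\tau_{\mathcal{F}}$-open, so $B(x,\tfrac{1}{m})$ is a nonempty open set), I pick $x_k\in D$ with $d(x,x_k)<\tfrac{1}{m}$; by the symmetry axiom (D2) this says precisely that $x\in B\big(x_k,\tfrac{1}{m}\big)$.

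It remains to verify $B\big(x_k,\tfrac{1}{m}\big)\subseteq B(x,r)$. Take $z\in B\big(x_k,\tfrac{1}{m}\big)$, so $d(x_k,z)<\tfrac{1}{m}$. If $d(x,z)=0$ then $z=x\in B(x,r)$; otherwise $d(x,z)>0$, and applying (D3) to the chain $(x,x_k,z)$ together with the monotonicity $(\mathcal{F}_1)$ of $f$ yields
\[
f\big(d(x,z)\big)\le f\big(d(x,x_k)+d(x_k,z)\big)+\alpha\le f\Big(\tfrac{2}{m}\Big)+\alpha<f(r).
\]
Were $d(x,z)\ge r$, then $(\mathcal{F}_1)$ would force $f(r)\le f\big(d(x,z)\big)$, contradicting the last display; hence $d(x,z)<r$, i.e.\ $z\in B(x,r)\subseteq U$. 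This establishes $x\in B\big(x_k,\tfrac{1}{m}\big)\subseteq U$, so $\mathcal{B}$ is a countable basis and $(X,d)$ is second countable.

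The main obstacle --- and the only point at which the $\mathcal{F}$-metric structure genuinely bites --- is the passage from the triangle-type inequality (D3), which controls only the transformed quantity $f\big(d(x,z)\big)$, to an honest numerical bound on $d(x,z)$ itself. Monotonicity of $f$ alone is too weak for this, since an $f\in\mathcal{F}$ need not be continuous, injective, or possess any usable inverse; the remedy is to carry the estimate entirely in the ``$f$-picture'', shrinking the auxiliary radius until $f\big(\tfrac{2}{m}\big)+\alpha$ falls below the fixed threshold $f(r)$ (which is exactly what $(\mathcal{F}_2)$ provides) and then comparing via $(\mathcal{F}_1)$. A secondary technical point one must not overlook is that the argument relies on open balls belonging to $\tau_{\mathcal{F}}$ --- both so that the members of $\mathcal{B}$ are open and so that density may be invoked on the balls $B(x,\tfrac{1}{m})$ --- a fact from the basic theory of $\mathcal{F}$-metric spaces that is already tacitly used in the first-countability theorem above.
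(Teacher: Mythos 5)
Your proof is correct and follows essentially the same route as the paper: balls centred at points of the countable dense set with a countable family of radii, $(\mathcal{F}_2)$ used to shrink the radius until $f(\text{radius sum})+\alpha$ drops below $f(r)$, the chain inequality (D3) applied through the nearby centre, and the contrapositive of $(\mathcal{F}_1)$ to convert $f(d(x,z))<f(r)$ back into $d(x,z)<r$. The only cosmetic difference is that you index radii by $\tfrac{1}{m}$ and invoke $(\mathcal{F}_2)$ on the sequence $\tfrac{2}{m}\to 0$, whereas the paper uses positive rational radii and the $\varepsilon$--$\delta$ form of the same condition.
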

\begin{proof}
Let $(X,d)$ be a separable $\mathcal{F}$-metric space. Then there exists $(f,\alpha)\in \mathcal{F}\times [0,\infty)$ satisfying the conditions $(D1$-$D3)$ of Definition \ref{D1} and a countable subset $A$ of $X$ such that $\bar{A}=X$. Let $\mathcal{B}= \{B(x,r):x \in A,~~r\in \mathbb{Q} ~\mbox{and}~ r>0\}$. 

Now, we show that $ \mathcal{B}$ is a base for the topology $\tau_{\mathcal{F}}$ on $X$ induced by $d$. To show this, let $U$ be any open subset of $X$ and $x \in X$. Then there exists a real number $r>0$ such that $$x\in B(x,r) \subset U.$$
From $(\mathcal{F}_2)$, it follows that for the above $r>0$, there exists $\delta >0$ such that $$ 0< t < \delta \Rightarrow f(t)<f(r) - \alpha.$$ Let $\delta_1 < \delta$ be a positive rational number. Then we have
\begin{align}
f(\delta_1) < f(r) - \alpha. \label{ss1}
\end{align}
Since, $\bar{A}=X$, there exists $a \in A$ such that $a \in B(x, \frac{\delta_1}{2}).$ So, $x \in B(a, \frac{\delta_1}{2}).$ Let, $B=B(a, \frac{\delta_1}{2}).$ Next, we show that $B \subset U$. Let $y \in B$, so $d(a,y) < \frac{\delta_1}{2}$.

Now, if $d(x,y)=0$, then $x=y$ and so $y \in U$ and if $d(x,y) >0$, then we have
\begin{align*}
f(d(x,y)) & \leq f(d(x,a)) +d(a,y)) +\alpha \\
& \leq f \Big(\frac{\delta_1}{2} + \frac{\delta_1}{2}\Big) + \alpha\\
& < f(r) - \alpha + \alpha\\
& =  f(r)\\
\Rightarrow d(x,y) &<r\\
\Rightarrow y \in B(x,r) & \subset U.
\end{align*}
Therefore, $x \in B \subset U$. Also, it is clear that $B \in \mathcal{B}$. Thus, $\mathcal{B}$ is a countable base for the topology $\tau_{\mathcal{F}}$ induced by $d$ and hence $(X,d)$ is second countable.
\end{proof}
It is interesting to note from the above theorems that some of the important topological properties of usual metric spaces also hold in case of these metric spaces. Moreover, metrizability is always an immensely worthwhile aspect for a metric space to own. Therefore, to complete the investigation on topological properties of $\mathcal{F}$-metric spaces, we pose the following interesting open problem.
\begin{Open Prob}
Is an $\mathcal{F}$-metric space metrizable? If not, further, which conditions will guarantee that it is metrizable?
\end{Open Prob}
\section{\bf Fixed point results via altering distance functions}
In this section, we present a few fixed point results concerning some special kinds of self-maps via altering distance functions. To begin with, we recall a crucial notion of an altering distance function which was originally coined by Khan et al. \cite{KSS}.
\begin{definition}
A function $\varphi:[0,\infty)\rightarrow [0,\infty)$ is called an altering distance function if 
\begin{enumerate}
\item[(i)]$\varphi$ is continuous,
\item[(ii)]$\varphi$ is non-decreasing,
\item[(iii)]$\varphi(t)=0\Longleftrightarrow t=0$.
\end{enumerate} 
\end{definition}
Again, in 1962, Edelstein \cite{E1} obtained the following version of the Banach contraction principle \cite{B1} relevant to the contractive mappings.
\begin{theorem}\cite{E1} \label{Edel}
Let $(X,d)$ be a compact metric space and $T:X\rightarrow X$ be a self-map. Assume that $$d(Tx,Ty)<d(x,y)$$ holds for all $x,y \in X$ with $x\neq y$. Then $T$ has a unique fixed point in $ X $.
\end{theorem}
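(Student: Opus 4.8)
The plan is to dispose of uniqueness first, since it is immediate, and then to establish existence by minimizing the displacement functional over the compact space $X$. For uniqueness, suppose $x$ and $y$ were two distinct fixed points of $T$. Then applying the contractive hypothesis to the pair $(x,y)$ gives $d(x,y)=d(Tx,Ty)<d(x,y)$, which is absurd; hence $T$ has at most one fixed point.

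For existence, I would first record that the hypothesis, together with the trivial equality $d(Tx,Tx)=0=d(x,x)$, yields $d(Tx,Ty)\leq d(x,y)$ for all $x,y\in X$, so $T$ is nonexpansive and in particular continuous. Next, define $g:X\to[0,\infty)$ by $g(x)=d(x,Tx)$. Since the metric $d$ is continuous on $X\times X$ and $x\mapsto(x,Tx)$ is continuous, $g$ is continuous; as $X$ is compact, $g$ attains its infimum at some point $x_0\in X$, say $g(x_0)=\min_{x\in X} g(x)$.

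The crux is to show $g(x_0)=0$, which I would argue by contradiction. If $Tx_0\neq x_0$, then applying the strict contractive inequality to the pair $(x_0,Tx_0)$ gives
$$g(Tx_0)=d\bigl(Tx_0,\,T(Tx_0)\bigr)<d(x_0,Tx_0)=g(x_0),$$
contradicting the minimality of $g(x_0)$. Therefore $Tx_0=x_0$, and together with the uniqueness already proved, $x_0$ is the unique fixed point of $T$ in $X$.

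The main (indeed only) subtlety is that one cannot prove existence here by iterating $T$ from an arbitrary seed and passing to a limit: the strict inequality $d(Tx,Ty)<d(x,y)$ provides no uniform contraction constant, so the orbit need not be Cauchy, and the theorem genuinely fails on non-compact spaces (for instance $x\mapsto x+\tfrac1x$ on $[1,\infty)$). Compactness is thus indispensable, and the correct device is to minimize the functional $g$ rather than to construct a Cauchy sequence; the strict inequality is precisely what forces this minimum value to be $0$.
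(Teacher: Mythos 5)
Your proof is correct. A small caveat first: the paper itself offers no proof of this statement --- it is quoted from Edelstein's 1962 paper as motivation --- so the natural comparison is with the proof the authors give for their generalization, Theorem \ref{t2}. There the argument is orbital: one iterates $T$ from an arbitrary seed, uses sequential compactness to extract a convergent subsequence of the orbit, shows $d(x_n,x_{n+1})\to 0$ by monotonicity of the displacements, and identifies the subsequential limit as the fixed point. Your route is genuinely different: you minimize the displacement functional $g(x)=d(x,Tx)$ over the compact space and let the strict inequality kill any positive minimum. In the classical metric setting your argument is shorter and cleaner, and all its ingredients check out --- uniqueness is immediate, $d(Tx,Ty)\leq d(x,y)$ for all pairs gives continuity of $T$ and hence of $g$, compactness gives attainment, and $g(Tx_0)<g(x_0)$ settles the rest; your remark that the statement fails on non-compact spaces (e.g.\ $x\mapsto x+\tfrac1x$ on $[1,\infty)$) is also accurate. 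What the orbital approach buys, and the likely reason the authors use it for Theorem \ref{t2}, is robustness: in an $\mathcal{F}$-metric space the distance function need not be continuous, so the continuity of $g$ and the attainment of its infimum are no longer free, whereas extracting convergent subsequences of an orbit survives the generalization. Note also that your proof, as is appropriate for the statement given, does not address convergence of the iterates $\{T^nx\}$ to the fixed point, which the paper's generalized version additionally asserts and which essentially requires the orbital argument.
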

Now, employing the idea of altering distance functions, we generalize Theorem \ref{Edel} in $\mathcal{F}$-metric  setting. The following result assures the existence and uniqueness of a fixed point arising of contractive-type mappings in this framework.
\begin{theorem} \label{t2}
Let $(X,d)$ be a sequentially compact $\mathcal{F}$-metric space and $T$ be a self-map on $X$ such that $$\varphi(d(Tx,Ty))<\varphi(d(x,y))$$ for all $x,y\in X$ with $x\neq y$, where $\varphi$ is an altering distance function. Then $T$ has a unique fixed point and for any $x\in X$, the sequence $\{T^n(x)\}$ is $\mathcal{F}$-convergent to that fixed point.
\end{theorem}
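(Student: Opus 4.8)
The plan is to transplant Edelstein's classical argument into the $\mathcal{F}$-metric setting, using the altering distance function $\varphi$ only to recover the ordinary strict contraction inequality. The first step is to note that the hypothesis forces $d(Tx,Ty)<d(x,y)$ for all $x\neq y$: if instead $d(Tx,Ty)\geq d(x,y)$ held, monotonicity of $\varphi$ would give $\varphi(d(Tx,Ty))\geq\varphi(d(x,y))$, contradicting the assumption. In particular $T$ is continuous on $(X,d)$, since whenever $x_{n}\to x$ one has $d(x_{n},x)\to 0$ and hence $d(Tx_{n},Tx)\leq d(x_{n},x)\to 0$ (the indices with $x_{n}=x$ contributing $0$), so $Tx_{n}\to Tx$; as $(X,d)$ is first countable, this sequential continuity is enough.

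Next I would settle existence. Fix $x_{0}\in X$ and set $x_{n}=T^{n}x_{0}$. If $x_{n}=x_{n+1}$ for some $n$ we already have a fixed point, so assume $x_{n}\neq x_{n+1}$ for every $n$; then $d(x_{n+1},x_{n+2})=d(Tx_{n},Tx_{n+1})<d(x_{n},x_{n+1})$, so $\{d(x_{n},x_{n+1})\}$ is strictly decreasing and converges to some $r\geq 0$, and the points $x_{n}$ are pairwise distinct. By sequential compactness there is a subsequence with $x_{n_{k}}\to z$ for some $z\in X$, and by continuity of $T$ also $x_{n_{k}+1}=Tx_{n_{k}}\to Tz$ and $x_{n_{k}+2}=T^{2}x_{n_{k}}\to T^{2}z$. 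Passing to the limit in the subsequences $d(x_{n_{k}},x_{n_{k}+1})$ and $d(x_{n_{k}+1},x_{n_{k}+2})$ of $\{d(x_{n},x_{n+1})\}$ should then give $d(z,Tz)=r=d(Tz,T^{2}z)$. If $r>0$ this is impossible, because $d(z,Tz)=r>0$ means $z\neq Tz$ and then the contraction yields $\varphi(r)=\varphi(d(Tz,T^{2}z))<\varphi(d(z,Tz))=\varphi(r)$. Hence $r=0$ and $z=Tz$.

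Uniqueness is immediate: if $Tz=z$ and $Tw=w$ with $z\neq w$, then $\varphi(d(z,w))=\varphi(d(Tz,Tw))<\varphi(d(z,w))$, absurd. For the convergence of an arbitrary orbit, fix $x\in X$ and put $y_{n}=T^{n}x$. If $y_{n}=z$ for some $n$ we are done; otherwise $d(y_{n+1},z)=d(Ty_{n},Tz)<d(y_{n},z)$ for all $n$, so $\{d(y_{n},z)\}$ strictly decreases to some $\ell\geq 0$. Extracting $y_{n_{k}}\to u$, continuity of $T$ gives $y_{n_{k}+1}=Ty_{n_{k}}\to Tu$, and passing to the limit as before yields $d(u,z)=\ell=d(Tu,z)$. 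If $u\neq z$ then $d(Tu,z)=d(Tu,Tz)<d(u,z)$ forces $\ell<\ell$, a contradiction; therefore $u=z$ and $\ell=d(u,z)=0$, i.e.\ $d(y_{n},z)\to 0$, so $\{T^{n}x\}$ is $\mathcal{F}$-convergent to $z$.

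The one genuinely non-routine point, and the place I expect the real work to sit, is each of the ``passing to the limit'' steps for the $\mathcal{F}$-metric, e.g.\ deducing $d(x_{n_{k}},Tx_{n_{k}})\to d(z,Tz)$ from $x_{n_{k}}\to z$ and $Tx_{n_{k}}\to Tz$. In an ordinary metric space this is just continuity of the metric, but here (D3) only delivers $f(d(\cdot,\cdot))\leq f(\text{sum of the intermediate distances})+\alpha$, and since $f\in\mathcal{F}$ need only be non-decreasing (not continuous, not strictly increasing) this inequality does not transfer to $d$ by itself. So a complete proof must first isolate an appropriate continuity, or lower/upper semicontinuity, property of an $\mathcal{F}$-metric along convergent sequences (or invoke one from \cite{JS1}); granting such a lemma, all the limit passages above go through and the theorem follows from the contraction inequality exactly as in the compact metric case.
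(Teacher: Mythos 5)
Your blueprint is the same Edelstein-style argument the paper uses (monotone successive distances, a subsequential limit from sequential compactness, uniqueness from the strict inequality, then monotonicity of the distance to the limit point), and your opening reduction is a genuine improvement on the paper's presentation: since $\varphi$ is non-decreasing, $\varphi(d(Tx,Ty))<\varphi(d(x,y))$ forces $d(Tx,Ty)<d(x,y)$, so the altering distance function is cosmetic and the continuity of $T$ (which the paper merely asserts) comes for free.

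The problem is the one you flag yourself, and it is not a routine technicality that a lemma from \cite{JS1} will supply: an $\mathcal{F}$-metric need not be continuous along convergent sequences (for $f(t)=\ln t$ and $\alpha>0$, axiom (D3) is a relaxed, $b$-metric-like triangle inequality, and $b$-metrics are the standard example of discontinuous distance functions), so ``passing to the limit'' in $d(x_{n_k},x_{n_k+1})\to d(z,Tz)$ is a genuine gap, not a deferred routine step. Two of your three uses of this continuity can, however, be eliminated by devices the paper actually employs. First, once $d(x_{n_k},x_{n_k+1})\to 0$ is known, $z=Tz$ follows with no continuity of $d$: if $d(z,Tz)>0$, applying (D3) to the chain $z,x_{n_k},x_{n_k+1},Tz$ gives $f(d(z,Tz))\le f\bigl(d(z,x_{n_k})+d(x_{n_k},x_{n_k+1})+d(x_{n_k+1},Tz)\bigr)+\alpha\to-\infty$ by $(\mathcal{F}_2)$, a contradiction. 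Second, for an arbitrary orbit you should not pass to the limit in $d(y_{n_k},z)$ with an unknown subsequential limit $u$; instead rerun the existence argument to conclude $u$ is a fixed point, hence $u=z$ by uniqueness, so $d(y_{n_k},z)\to 0$ is literally the definition of $y_{n_k}\to z$, and the monotone sequence $d(y_n,z)$ then converges to $0$. What cannot be repaired this way is the step $r=\lim_n d(x_n,x_{n+1})=0$: there you and the paper commit exactly the same unjustified limit passage (the paper writes $\lim_k\varphi(d(x_{n_k},x_{n_k+1}))=\varphi(d(z,Tz))$ without comment), and neither argument closes it; this is the one place where a continuity or semicontinuity property of the $\mathcal{F}$-metric, or a different idea altogether, is still required.
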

\begin{proof}
Let $x_0 \in X$ and we define a sequence $\{ x_n\}$ by $x_n=T^n(x_0)$ for  $n\in \mathbb{N}$. If $x_n=x_{n+1}$ for some $n\in \mathbb{N}$, then $T$ must have a fixed point $x_n \in X$. So, we assume $x_n\neq x_{n+1}$ for all $n\in \mathbb{N}$. Since $(X,d)$ is a sequentially compact $\mathcal{F}$-metric space, then there exists a convergent subsequence $\{x_{n_{k}}\}$ of $\{x_n\}$ that converges to $z$. Since $T$ is continuous, it follows that the subsequence $\{x_{n_{k}}\}$  converges to $Tz$. Take $s_n=\varphi(d(x_n,x_{n+1}))$ for all $n \in \mathbb{N}$. Then we have,
\begin{align*}
s_{n+1} &= \varphi(d(x_{n+1},x_{n+2}))\\
&< \varphi(d(x_n,x_{n+1}))\\
&= s_n
\end{align*}
This shows that the sequence of non-negative real numbers  $\{s_n\}$ is a decreasing sequence and hence convergent to some $a \geq 0$. Next, if $a>0$, then we have $$0<a = \displaystyle \lim_{k\to \infty} \varphi(d(x_{n_k},x_{n_k+1}))=\varphi(d(z,Tz)).$$
Also we have $$0<a = \displaystyle \lim_{k\to \infty} \varphi(d(x_{n_k+1},x_{n_k+2}))=\varphi(d(Tz,T^2z))<\varphi(d(z,Tz))= a,$$
which leads to a contradiction. So we must have $a=0.$ Thus the sequence $\{s_n\}$  converges to zero. Therefore we obtain
\begin{align*}
\lim_{n\to \infty} \varphi(d(x_n,x_{n+1}))&=0\\
 \implies \lim_{n\to \infty} d(x_n,x_{n+1})&=0.
\end{align*}
Now we show that $z$ is a fixed point of $T$. On the contrary, let $z\neq Tz$. Then $d(z,Tz)>0$ and so by $(D3)$ we have $$f(d(z,Tz)) \leq f\{d(z,x_{n_k})+d(x_{n_k},x_{n_k+1})+d(x_{n_k+1},Tz)\} \to -\infty ~\mbox{as} ~k \to \infty,$$ which is a contradiction. So we must have $z=Tz$, i.e., $z$ is a fixed point of $T$.

Now, to prove the uniqueness of the fixed point, if possible assume, $T$ has two fixed points $z_1,z_2\in X$ with $z_1\neq z_2$, i.e., $Tz_1=z_1~~\mbox{and}~~ Tz_2=z_2$. Then 
\begin{align*}
\varphi(d(z_1,z_2))&=\varphi(d(Tz_1,Tz_2))\\
&<\varphi(d(z_1,z_2)),
\end{align*}
which is impossible. Therefore, we obtain $z_1=z_2$. Hence $T$ has a unique fixed point. Eventually, we prove that sequence $\{x_n\}$ converges to $z$. If $x_n=z$ for finitely many $n\in \mathbb{N}$, then we can exclude those $x_n$ from $\{x_n\}$ and assume that, $x_n\neq z$ for all $n\in \mathbb{N}$. Then from the sequentially compactness, we have $$\lim_{k\to \infty}d(x_{n_k},z)=0,$$ that is, $z$ is the accumulation point of the sequence $\{x_n\}$. Again, if $z_1$ be another accumulation point of $\{x_n\}$, then there exists a subsequence of $\{x_n\}$ which converges to $z_1$. Then continuing as above discussion, we can show that $z_1$ is a fixed point of $T$, which implies that $z=z_1$. So, $z$ is the unique accumulation point of $\{x_n\}$.

Now, we consider a sequence $\{\alpha_n\}$ of real numbers with $\alpha_n=\varphi(d(x_n,z))$ for all $n\in \mathbb{N}$. Therefore, $\alpha_{n_k}=\varphi(d(x_{n_k},z))\rightarrow 0$ as $n\rightarrow \infty$, which implies $\{\alpha_n\}$ has a subsequence $\{\alpha_{n_k}\}$ that converges to $0$. So $0$ is an accumulation point of $\{\alpha_n\}$. Now, we have, 
\begin{align*}
\alpha_{n+1}=&\varphi(d(x_{n+1},z))\\
=& \varphi(d(Tx_n,Tz))\\
<& \varphi (d(x_n,z))\\
=& \alpha_n
\end{align*}
for all $n\in \mathbb{N}$. Hence $\{\alpha_n\}$ is monotonic decreasing sequence of non-negative real numbers and $0$ is an accumulation point of $\{\alpha_n\}$. Then $\{\alpha_n\}$ must converge to $0$. Therefore, letting $n\rightarrow \infty$, we obtain $\varphi(d(x_n,z))\rightarrow 0$. This implies
$$\lim_{n\rightarrow \infty}d(x_n,z)=0.$$
Hence $\{x_n\}$ converges to $z$ and so $\{T^nx_0\}$ converges to $z$. Since $x_0\in X$ is arbitrary, $\{T^nx\}$ converges to $z$ for each $x\in X$.
\end{proof}
From the above theorem, we can establish the subsequent corollary by taking $\varphi(t)=t$ for all $t\in [0,\infty)$.
\begin{corollary}
Let $(X,d)$ be a sequentially compact $\mathcal{F}$-metric space and $T:X\rightarrow X$ be a mapping such that $$d(Tx,Ty)< d(x,y)$$ for all $x,y\in X$ with $x\neq y$, where $\varphi$ is an altering distance function.  Then $T$ has a unique fixed point in $X$ and for any $x\in X$, the sequence $\{T^nx\}$ converges to that fixed point.
\end{corollary}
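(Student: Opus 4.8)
The plan is to obtain this statement as a direct specialization of Theorem~\ref{t2}. First I would exhibit the candidate altering distance function, namely $\varphi\colon[0,\infty)\to[0,\infty)$ given by $\varphi(t)=t$, and verify that it meets the three defining requirements: it is continuous, being the identity map on $[0,\infty)$; it is non-decreasing, since $0\le s\le t$ forces $\varphi(s)=s\le t=\varphi(t)$; and $\varphi(t)=0$ holds precisely when $t=0$. Hence $\varphi(t)=t$ is an altering distance function.

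Next I would observe that, for this choice of $\varphi$, the contractive hypothesis of the corollary, $d(Tx,Ty)<d(x,y)$ for all $x,y\in X$ with $x\neq y$, is exactly the inequality $\varphi(d(Tx,Ty))<\varphi(d(x,y))$ demanded in Theorem~\ref{t2}. (The clause ``where $\varphi$ is an altering distance function'' in the statement is essentially a typographical residue, since no $\varphi$ appears in the displayed inequality; it is precisely the identity map that plays that role here.) Thus all hypotheses of Theorem~\ref{t2} are in force, $(X,d)$ being sequentially compact $\mathcal{F}$-metric and $T$ satisfying the required $\varphi$-contraction.

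Invoking Theorem~\ref{t2} then delivers at once that $T$ has a unique fixed point $z\in X$ and that, for every $x\in X$, the sequence $\{T^n x\}$ is $\mathcal{F}$-convergent to $z$. I do not expect any genuine obstacle: the single point meriting a line of care is checking that $\varphi(t)=t$ is an admissible altering distance function, after which the conclusion is immediate from the already established theorem.
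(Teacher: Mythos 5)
Your proposal is correct and matches the paper's own derivation exactly: the paper obtains this corollary from Theorem \ref{t2} by taking $\varphi(t)=t$, which is precisely your argument (including the routine check that the identity is an altering distance function).
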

The succeeding example authenticates previously discussed Theorem \ref{t2}.
\begin{example}
Let $X=[0,1]$ and define the metric $d:X\times X\rightarrow [0,\infty)$ by $$d(x,y)=|x-y|$$ for all $x,y\in X$. Also consider a self-map $T$ on $X$ by $$T(x)=1-\frac{x}{2},$$ for all $x\in X$. Then $(X,d)$ is a $\mathcal{F}$-metric space with $f(t)=\ln t$ and $\alpha =0$ and also $\mathcal{F}$-compact. Now, consider $\varphi(t)=t^2,~~ t\in [0,\infty)$. We have,
\begin{align*}
\varphi(d(Tx,Ty))&=\varphi(|Tx-Ty|)\\
&=|Tx- Ty|^2\\
&=\Big(1-\frac{x}{2}-1+\frac{y}{2}\Big)^2\\
&=\frac{1}{4}(x-y)^2\\
& <(x-y)^2\\
& =\varphi(d(x,y)),
\end{align*}
for all $x,y\in X$ with $x\neq y.$ Therefore, $$\varphi(d(Tx,Ty))<\varphi(d(x,y))$$ holds for all $x,y\in X$ with $x\neq y$. Thus $T$ satisfies all the hypotheses of Theorem \ref{t2} and hence possesses a unique fixed point $x=\frac{2}{3}\in X$.
\end{example}
In the next theorem, we consider Kannan-type contractive mappings defined on an $\mathcal{F}$-metric space.
\begin{theorem} \label{t3}
Let $T$ be a self-mapping on an $\mathcal{F}$-metric space $(X,d)$. Suppose there exists $x_0\in X$ such that the orbit $\phi(x_0)=\{T^n(x_0):n\in \mathbb{N}\}$ has an accumulation point $z\in X$. If $T$ is orbitally  continuous at $z$ and there exists an altering distance function $\varphi$ such that 
$$\varphi(d(Tx,Ty))<\frac{1}{2}\{\varphi(d(x,Tx))+\varphi(d(y,Ty))\}$$ holds for all $x,y=Tx\in \phi\overline{(x_0)}$ with $x\neq y$, then $z$ is the unique fixed point of $T$.
\end{theorem}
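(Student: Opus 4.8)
The plan is to run the argument of Theorem~\ref{t2} with the subsequence supplied by the accumulation point in place of the one extracted from sequential compactness, and with orbital continuity in place of continuity. Put $x_n:=T^n(x_0)$, so $x_n\in\phi(x_0)\subseteq\overline{\phi(x_0)}$ for $n\ge 1$. If $x_{n_0}=x_{n_0+1}$ for some $n_0$, then $\{x_n\}$ is eventually constant, so (using that $(X,d)$ is Hausdorff) its only accumulation point is $x_{n_0}$; hence $z=x_{n_0}$ and $Tz=Tx_{n_0}=x_{n_0+1}=x_{n_0}=z$, and one concludes with the uniqueness step below. So assume $x_n\neq x_{n+1}$ for all $n$. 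The pair $(x_n,x_{n+1})$ has the form $(x,Tx)$ with $x\neq Tx$ and both entries in $\overline{\phi(x_0)}$, so the contractive hypothesis gives
\[
\varphi(d(x_{n+1},x_{n+2}))<\tfrac12\bigl(\varphi(d(x_n,x_{n+1}))+\varphi(d(x_{n+1},x_{n+2}))\bigr),
\]
whence $\varphi(d(x_{n+1},x_{n+2}))<\varphi(d(x_n,x_{n+1}))$. Therefore $s_n:=\varphi(d(x_n,x_{n+1}))$ is strictly decreasing in $[0,\infty)$; let $a\ge 0$ be its limit, so $s_n>a$ for every $n$.

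Next I would exploit the accumulation point. Choose a subsequence with $x_{n_k}\to z$. Orbital continuity at $z$ gives $x_{n_k+1}=Tx_{n_k}\to Tz$; applying orbital continuity once more along the orbit gives $x_{n_k+2}\to T^2z$, so that $z,\,Tz,\,T^2z\in\overline{\phi(x_0)}$. Passing to the limit along $\{n_k\}$ in the continuous maps $\varphi$ and $d$,
\[
a=\lim_{k\to\infty}\varphi\bigl(d(x_{n_k},x_{n_k+1})\bigr)=\varphi(d(z,Tz)),\qquad a=\lim_{k\to\infty}\varphi\bigl(d(x_{n_k+1},x_{n_k+2})\bigr)=\varphi(d(Tz,T^2z)).
\]
Suppose, for contradiction, that $z\neq Tz$. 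Then $(z,Tz)$ is once more a pair of the form $(x,Tx)$ with $x\neq Tx$ lying in $\overline{\phi(x_0)}$, and the contractive hypothesis yields $\varphi(d(Tz,T^2z))<\varphi(d(z,Tz))$, i.e.\ $a<a$, which is absurd. Hence $z=Tz$, and therefore $a=\varphi(0)=0$, that is $d(x_n,x_{n+1})\to 0$. (If one prefers to reach $z=Tz$ without the limit computation, once $d(x_n,x_{n+1})\to0$ is established one may invoke $(D3)$ exactly as in Theorem~\ref{t2}: assuming $d(z,Tz)>0$, $f(d(z,Tz))\le f\bigl(d(z,x_{n_k})+d(x_{n_k},x_{n_k+1})+d(x_{n_k+1},Tz)\bigr)+\alpha\to-\infty$, a contradiction.)

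For uniqueness, suppose $w\in\overline{\phi(x_0)}$ is a fixed point of $T$ with $w\neq z$. Applying the contractive inequality to $x=z$ and $y=w$ (so $Tz=z$, $Tw=w$) gives
\[
\varphi(d(z,w))=\varphi(d(Tz,Tw))<\tfrac12\bigl(\varphi(d(z,Tz))+\varphi(d(w,Tw))\bigr)=\tfrac12\bigl(\varphi(0)+\varphi(0)\bigr)=0,
\]
contradicting $\varphi\ge 0$. Hence $z$ is the unique fixed point of $T$.

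The step I expect to be the main obstacle is the passage to the limit in the middle paragraph. The contractive condition only constrains consecutive points of the orbit, so the entire difficulty is to convert the strict decay of $\{s_n\}$ into an \emph{equality} at the accumulation point $z$; this requires pushing the limit through $\varphi\circ d$ along the convergent subsequence, which in turn leans on orbital continuity carrying $\{x_{n_k}\}$ the extra step to $x_{n_k+2}\to T^2z$ and on the sequential continuity of the $\mathcal{F}$-metric $d$ along such convergent orbit sequences. The $\alpha$-term in $(D3)$ is precisely what makes this continuity delicate, and handling it carefully (as already done implicitly in the proof of Theorem~\ref{t2}) is the heart of the argument.
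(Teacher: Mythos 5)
Your argument is correct at exactly the level of rigour of the paper's own proof, and it rests on the same core mechanism: strict monotonicity of $s_n=\varphi(d(x_n,x_{n+1}))$, passage to the limit along the subsequence $\{x_{n_k}\}$ supplied by the accumulation point together with orbital continuity, and the strict inequality $\varphi(d(Tz,T^2z))<\varphi(d(z,Tz))$ obtained from the pair $(z,Tz)$, yielding the contradiction $a<a$. The difference is in what you do with that contradiction. The paper uses it only to conclude $a=0$ and then takes a long detour: it argues via $(\mathcal{F}_2)$ and $(D3)$ that $\{x_n\}$ is $\mathcal{F}$-Cauchy, deduces $x_n\to z$, and finally gets $z=Tz$ from orbital continuity and uniqueness of limits. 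You observe that the limit identities $a=\varphi(d(z,Tz))=\varphi(d(Tz,T^2z))$ already force $z=Tz$ outright, so the whole Cauchy argument is superfluous for the stated conclusion (it would only be needed to show that the entire orbit converges to $z$, which the theorem does not assert); your route also avoids the paper's questionable step of bounding $\sum_{i=n}^{m-1}\varphi(d(x_i,x_{i+1}))$ by $\delta$ merely because the individual summands tend to zero. You are moreover more careful than the paper in verifying $z,Tz\in\overline{\phi(x_0)}$ before invoking the contractive hypothesis and in identifying $z$ with the fixed point in the degenerate case $x_{n_0}=x_{n_0+1}$. The two delicate points you flag are genuine but are inherited from, not introduced by, your argument: the identity $\lim_k\varphi(d(x_{n_k},x_{n_k+1}))=\varphi(d(z,Tz))$ presupposes sequential continuity of $d$, which a general $\mathcal{F}$-metric need not enjoy because of the additive $\alpha$ in $(D3)$, and $x_{n_k+2}\to T^2z$ needs orbital continuity at $Tz$, not only at $z$; the paper uses both limits without comment. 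Likewise, the uniqueness step (yours and the paper's alike) applies the contractive inequality to a pair that is not of the form $(x,Tx)$, which the hypothesis as literally written does not license.
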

\begin{proof}
Let us define a sequence $\{x_n\}$ by $Tx_n=x_{n+1}$ for $n\in \mathbb{N}_0$. If $x_n=x_{n+1}$ for some $n$, then $T$ must have a fixed point. Now, we assume $x_n\neq x_{n+1}$ for every $n\in \mathbb{N}_0$. If $\alpha_n=\varphi(d(x_n,x_{n+1}))$, then by the given condition it follows that
\begin{align*}
\alpha_{n+1}&=\varphi(d(x_{n+1},x_{n+2}))\\
&=\varphi(d(Tx_n,Tx_{n+1}))\\
&<\frac{1}{2}\{\varphi(d(x_n,Tx_n))+\varphi(d(x_{n+1},Tx_{n+1}))\}\\
&=\frac{1}{2}\{\varphi(d(x_n,x_{n+1}))+\varphi(d(x_{n+1},x_{n+2}))\}\\
&=\frac{1}{2}\{\alpha_n+\alpha_{n+1}\}\\
&<\alpha_n\\
\Rightarrow \alpha_{n+1} &<\alpha_n.
\end{align*}
This shows that $\{\alpha_n\}$ is a strictly decreasing sequence of positive reals and it must be convergent to some non-negative real number $a$. Since $\phi(x_0)$ has an accumulation point, there exists a sequence of positive integers $\{n_k\}$ such that $\{x_{n_k}\}$ converges to some $z\in X$. Therefore, by orbitally continuity of $T$, we get $\{x_{n_k+1}\}$ converges to $Tz$.

Now, we claim that $a=0$. If $a \neq 0$, then we have $$0<a =\displaystyle \lim_{k\to \infty} \varphi(d(x_{n_k},x_{n_k+1}))=\varphi(d(z,Tz)).$$
Also we have $$0<a = \displaystyle \lim_{k\to \infty} \varphi(d(x_{n_k+1},x_{n_k+2}))=\varphi(d(Tz,T^2z))<\varphi(d(z,Tz)) = a,$$
which leads to a contradiction. So we must have $a=0.$ Thus the sequence $\{\alpha_n\}$  converges to zero. Let there exists $(f,\alpha)\in \mathcal{F}\times [0,\infty)$ satisfying the conditions $(D1$-$D3)$ of Definition \ref{D1}. Then by $(\mathcal{F}_2)$, for a given $\epsilon>0$, there exists a $\delta>0$ such that 
\begin{align} \label{t3eq1}
0<t<\delta\ \ \Rightarrow f(t)<f(\varphi(\epsilon))-\alpha.
\end{align}
Now, \begin{align*}
\varphi(d(x_n,x_{n+1}))&<\frac{1}{2}\{\varphi(d(x_{n-1},Tx_{n-1}))+\varphi(d(x_n,Tx_n))\}\\
&=\frac{1}{2}\{\varphi(d(x_{n-1},x_n))+\varphi(d(x_n,x_{n+1}))\}\\
&=\frac{1}{2}\{\varphi(d(T^{n-1}x_0,T^nx_0))+\varphi(d(T^nx_0,T^{n+1}x_0))\}.
\end{align*}
Finally we obtain, $$\sum_{i=n}^{m-1}\varphi(d(x_i,x_{i+1}))<\sum_{i=n}^{m-1}\frac{1}{2}\{\varphi(d(T^{i-1}x_0,T^ix_0))+\varphi(d(T^ix_0,T^{i+1}x_0))\}.$$
Since $$\lim_{n\rightarrow \infty}\{\varphi(d(T^{n-1}x_0,T^nx_0))+\varphi(d(T^nx_0,T^{n+1}x_0))\}=0,$$ 
there exists some $N\in \mathbb{N}$ such that $$0<\sum_{i=n}^{m-1}\varphi(d(x_i,x_{i+1}))<\delta,$$
holds for all $n\geq N$. Hence by \eqref{t3eq1} and ($\mathcal{F}_1$), we have
\begin{align} \label{t3eq2}
 f\Big(\sum_{i=n}^{m-1}\varphi(d(x_i,x_{i+1}))\Big)< f(\varphi(\epsilon))-\alpha.
\end{align}
Now, we show that $$d(x_n,x_m) < \epsilon$$ for all $m>n \geq N$. Let $m,n \in \mathbb{N}$ be fixed but arbitrary such that $m>n \geq N$. If $d(x_n,x_m) = 0$, then clearly $d(x_n,x_m) < \epsilon$ and if $d(x_n,x_m) > 0$, then using (D3) and \eqref{t3eq2}, we get 
\begin{align*}
 \varphi(d(x_n,x_m))&>0\\
\Rightarrow f(\varphi(d(x_n,x_m)))&\leq f\Big(\sum_{i=n}^{m-1}\varphi(d(x_i,x_{i+1}))\Big)+\alpha\\
&<f(\varphi(\epsilon)),
\end{align*}
which gives by $(\mathcal{F}_1)$ that
\begin{align*}
\varphi(d(x_n,x_m))<& \varphi(\epsilon)\\
\Rightarrow d(x_n,x_m)<& \epsilon,
\end{align*}
for all $m>n\geq N.$
This proves that $\{x_n\}$ is $\mathcal{F}$-Cauchy. Since $\{x_n\}$ has a subsequence $\{x_{n_k}\}$ which converges to $z$, the limit of $\{x_n\}$ will be $z$. This means that
\begin{equation} \label{t3eq3}
\lim_{n\rightarrow \infty}d(x_n,z)=0.
\end{equation}
This implies that $\{x_n\}$ tends to $z$ and by orbitally continuity of $T$, we get $\{Tx_n\}$ tends to $Tz$. Since $Tx_n=x_{n+1}$, we have, by the uniqueness of limit of sequence, $z=Tz.$
%Let us suppose that, $d(z,Tz)>0$ which implies $\varphi(d(z,Tz))>0.$ Then using (D3) and the orbital continuity of $T$ at $z$, we obtain 
%\begin{align*}
%f(\varphi(d(z,Tz)))&\leq f(\varphi(d(z,x_{n+1}))+\varphi(d(x_{n+1},Tz)))+\alpha, \\
%&\leq f(\varphi(d(z,x_{n+1}))+\varphi(d(Tx_n,Tz)))+\alpha\\
%&<f(\varphi(d(z,x_{n+1}))+\frac{1}{2}\{ \varphi(d(x_n,x_{n+1}))+\varphi(d(z,Tz))\})+\alpha\\
%&<f(\varphi(d(z,x_{n+1}))+ \frac{1}{2}\{\varphi(d(x_n,x_{n+1}))+\varphi(d(z,z))\})+\alpha\\
%&<f(\varphi(d(z,x_{n+1}))+ \frac{1}{2}\varphi(d(x_n,x_{n+1})))+\alpha
%\end{align*}
%for all $n\in \mathbb{N}$. On the other hand, using $(\mathcal{F}_2)$ and \eqref{t3eq3}, we have $$\lim_{n\rightarrow \infty}f(\varphi(d(z,x_{n+1}))+ \frac{1}{2}\varphi(d(x_n,x_{n+1})))+\alpha=-\infty,$$ which leads to a contradiction. Therefore, $$\varphi(d(z,Tz))=0 \implies Tz=z.$$ 
Hence $z$ is a fixed point of $T$. For uniqueness, let $z^*$ be another fixed point of $T$. Then
\begin{align*}
\varphi(d(z,z^*))&=\varphi(d(Tz,Tz^*))\\
&<\frac{1}{2}\{\varphi(d(z,Tz))+\varphi(d(z^*,Tz^*))\}\\
&<0,
\end{align*}
a contradiction. Therefore $z$ is the unique fixed point of $T$.
\end{proof}
\begin{remark}
The following example shows that, if we take any two points $x,y=Tx\in \phi(x_0)$ with $x\neq y$ satisfying the inequality
$$\varphi(d(Tx,Ty))<\frac{1}{2}\{\varphi(d(x,Tx))+\varphi(d(y,Ty))\},$$ then the sequence of iterates $\{T^n(x_0)\}$ may not converge to the accumulation point of $\phi(x_0)$.
\end{remark}
\begin{example}
Let $X=\{2,-2,2+\frac{1}{3n},-2-\frac{1}{3n+1}:n\in \mathbb{N}\}$ and define the metric $d:X\times X\rightarrow [0,\infty)$ by $$d(x_1,y_1)=|x_1-y_1|$$ for all $x_1,y_1\in X$. Then $(X,d)$ is a $\mathcal{F}$-metric space with $f(t)=\ln t$, for $t>0$ and $\alpha=0$. Now, we define $T$ on $X$ by 
\begin{align*}
T(2)=-2,~~& T(-2)=2,~~T\Big(2+\frac{1}{3n}\Big)=-2-\frac{1}{3n+1}\\~~\mbox{and}~~ & T\Big(-2-\frac{1}{3n+1}\Big)=2+\frac{1}{3(n+1)}.
\end{align*}
 Now, for $x_0=2+\frac{1}{3}$, we have $$\phi(x_0)=\Big\{2+\frac{1}{3},-2-\frac{1}{4},2+\frac{1}{6},-2-\frac{1}{7},2+\frac{1}{9},-2-\frac{1}{10},\cdots\Big\}.$$
Then it can be easily verified that the inequality $$\varphi(d(Tx,Ty))<\frac{1}{2}\{\varphi(d(x,Tx))+\varphi(d(y,Ty))\}$$ is satisfied for all $x,y=Tx\in \phi(x_0)$ with $x\neq y$ and $\varphi(t)=t,\; t\geq 0$. Whenever $z=2$ in $\phi\overline{(x_0)}$, we have $Tz=-2$. Moreover, $T$ is orbitally continuous at $z$. But still $z$ is not a fixed point of $T$. 
\end{example}
\begin{example}
Let $\{e_i\}$ be the sequence of real numbers whose $i$-th term is $1$ and all other terms are $0$. Take $X=\{e_i: i\geq 1\}$. Define $d: X \times X \to \mathbb{R}$ by 
\[
d(x,y) = \left\{\begin{array}{ll}
        1+\left|\frac{1}{\sup_i |e_i| }-\frac{1}{\sup_i |e_j|}\right|, & \text{if} x=e_i~\mbox{and}~ y=e_j~\mbox{with}~ x\neq y;\\
       0, & \text{if } x=y.
       \end{array}\right.     
\]
Then it is easy to note that $(X,d)$ is an $\mathcal{F}$-metric space with $f(t)=\ln t$ and $\alpha=0$. Also, it is clear that $(X,d)$ is $\mathcal{F}$-complete but not $\mathcal{F}$-compact.

Now, define a mapping $T: X \to X$ by $$T(e_i)=e_{3i}$$ for all $e_i \in X$. Therefore, for $i < j$, we have 
\begin{align*}
d(Te_i,Te_j)&= 1+\left|\frac{1}{3i}-\frac{1}{3j}\right|\\
&= 1+\frac{1}{3i}-\frac{1}{3j}< 1+\frac{1}{3i};
\end{align*}
whereas, 
\begin{align*}
\frac{1}{2}\{d(e_i,Te_i)+d(e_j,Te_j)\}&=\frac{1}{2}\left\{1+\left|\frac{1}{i}-\frac{1}{3i}\right|+1+\left|\frac{1}{j}-\frac{1}{3j}\right|\right\}\\
&=1+\frac{1}{3i}+\frac{1}{3j}>1+\frac{1}{3i}.
\end{align*}
So, $$d(Te_i,Te_j)<\frac{1}{2}\{d(e_i,Te_i)+d(e_j,Te_j)\}.$$ In a similar manner, we can show that $$d(Te_i,Te_j)<\frac{1}{2}\{d(e_i,Te_i)+d(e_j,Te_j)\}$$ if $i>j$. Therefore, $$d(Tx,Ty)< \frac{1}{2}\{d(x,Tx) + d(y,Ty)\}$$ for all $x, y \in X$ with $x \neq y$, but $T$ has no fixed point.
\end{example}
From the above example, we see that the completeness of $X$ can not alone guarantee the existence of a  fixed point for the Kannan-type contractive mappings.
\begin{theorem}
Let $(X,d)$ be an $\mathcal{F}$-complete metric space and $T$ be a continuous self-map on $T$ such that 
\begin{equation} \label{k1}
\varphi(d(Tx,Ty))< \frac{1}{2}\{\varphi(d(x,Tx)) +\varphi(d(y,Ty))\}
\end{equation}
 for all $x,y \in X$ with $x\neq y$, where $\varphi$ is an altering distance function. Also assume that
 for any $x\in X$ and for any $\epsilon>0$, there exists $\delta>0$ such that $$\varphi(d(T^ix,T^jx))<\epsilon + \delta \Rightarrow \varphi(d(T^{i+1}x,T^{j+1}x))\leq\epsilon$$  for any $i,j\in \mathbb{N}$. Then $T$ has a unique fixed point, and for any $x \in X$, the sequence of iterates $\{T^nx\}$ converges to that fixed point.
\end{theorem}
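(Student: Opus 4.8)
The plan is to run the standard Picard–orbit argument, using the uniform hypothesis precisely where asymptotic regularity and Cauchyness would otherwise fail in the absence of a Lipschitz constant. Fix $x\in X$ and put $x_n=T^n x$ for $n\in\mathbb{N}_0$; if $x_n=x_{n+1}$ for some $n$ we are done, so assume $x_n\neq x_{n+1}$ for all $n$. Setting $\alpha_n=\varphi(d(x_n,x_{n+1}))$, condition \eqref{k1} applied to the pair $(x_n,x_{n+1})$ gives, just as in the proof of Theorem~\ref{t3}, $\alpha_{n+1}<\frac{1}{2}(\alpha_n+\alpha_{n+1})$, so $\{\alpha_n\}$ strictly decreases to some $a\geq 0$. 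To force $a=0$ I would feed $\epsilon=a$ (assuming $a>0$) to the uniform hypothesis with the index pair $(i,i+1)$: this yields $\delta>0$ with $\alpha_i<a+\delta\Rightarrow\alpha_{i+1}\leq a$, which is impossible once $n$ is large enough that $\alpha_n<a+\delta$, since then $\alpha_{n+1}\leq a<\alpha_{n+1}$. Hence $a=0$, and because $\varphi$ is an altering distance function (continuous, $\varphi(t)=0\iff t=0$) this upgrades to $d(x_n,x_{n+1})\to 0$; applying $(D3)$ along the finite chain $x_n,\dots,x_{n+p}$ together with $(\mathcal{F}_2)$ then also gives $d(x_n,x_{n+p})\to 0$ for every fixed $p$.

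The crux is showing that $\{x_n\}$ is $\mathcal{F}$-Cauchy, and here the uniform hypothesis does the work that summability of $\sum d(x_i,x_{i+1})$ would do in the classical Kannan setting. Fix $\eta>0$, apply the hypothesis with $\eta/2$ to get $\delta\in(0,\eta/2)$ such that $\varphi(d(x_i,x_j))<\eta/2+\delta\Rightarrow\varphi(d(x_{i+1},x_{j+1}))\leq\eta/2$, and use asymptotic regularity to pick $N$ so that $\alpha_n$ (hence $d(x_n,x_{n+1})$) is suitably small for $n\geq N$. Then I would prove by induction on $m$ that $\varphi(d(x_n,x_m))<\eta/2+\delta$ for all $m>n\geq N$: the case $m=N+1$ is the choice of $N$; for the step, the inductive hypothesis at $(n',m)$ feeds the hypothesis to give $\varphi(d(x_{n'+1},x_{m+1}))\leq\eta/2$, and the only genuinely new estimate needed is $\varphi(d(x_N,x_{m+1}))<\eta/2+\delta$, obtained by applying $(D3)$ to the three-point chain $x_N,x_{N+1},x_{m+1}$ and controlling the resulting bound via $(\mathcal{F}_1)$--$(\mathcal{F}_2)$. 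Since $\varphi(d(x_n,x_m))$ can thus be made arbitrarily small uniformly in $m>n$, and smallness of $\varphi(d(x_n,x_m))$ forces smallness of $d(x_n,x_m)$, the sequence $\{x_n\}$ is $\mathcal{F}$-Cauchy. I expect this step to be the main obstacle: $(D3)$ is phrased in terms of $d$ while the hypothesis and the monotone quantities $\alpha_n$ live on $\varphi\circ d$, and the additive constant $\alpha$ in $(D3)$ means the triangle step is not free, so the constants $\eta,\delta,N$ have to be threaded carefully (using continuity of $\varphi$ at $0$ and $(\mathcal{F}_2)$ to pass between smallness of $d$-values and smallness of $\varphi$-values).

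Once $\{x_n\}$ is $\mathcal{F}$-Cauchy, $\mathcal{F}$-completeness yields $z\in X$ with $d(x_n,z)\to 0$; continuity of $T$ gives $x_{n+1}=Tx_n\to Tz$, while also $x_{n+1}\to z$, so by uniqueness of limits in the (Hausdorff) $\mathcal{F}$-metric topology we get $Tz=z$. For uniqueness, if $Tw=w$ with $w\neq z$ then \eqref{k1} forces $\varphi(d(z,w))=\varphi(d(Tz,Tw))<\frac{1}{2}\{\varphi(d(z,Tz))+\varphi(d(w,Tw))\}=0$, contradicting $d(z,w)>0$. Finally, since $x\in X$ was arbitrary, repeating the argument from any starting point produces a limit of $\{T^n x\}$ that is a fixed point of $T$, hence equals $z$; thus $\{T^n x\}$ converges to $z$ for every $x\in X$.
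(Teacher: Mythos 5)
Your skeleton is right and three of the four pieces are fine: the asymptotic regularity argument (feeding $\epsilon=a$ with index pair $(i,i+1)$ into the uniform hypothesis to force $a=0$) is exactly what the paper does, and your treatment of the limit being a fixed point and of uniqueness matches the paper's. The problem is the step you yourself flag as the crux. Your Cauchy argument is only a sketch, and as sketched it does not close: in the inductive step for the pair $(N,m+1)$ you must convert the bound $\varphi(d(x_{N+1},x_{m+1}))\leq\eta/2$ into a bound on $d(x_{N+1},x_{m+1})$ small enough that, after passing through $(D3)$ and inverting $f$ via $(\mathcal{F}_2)$, you recover $\varphi(d(x_N,x_{m+1}))<\eta/2+\delta$. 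Tracing the dependencies, this requires something like $\varphi(\delta'/2)>\eta/2$ where $\delta'$ is itself determined (through $f$ and $\alpha$) by a threshold $t_0$ with $\varphi(t_0)\leq\eta/2+\delta$, and $\delta$ is the (possibly tiny) output of the uniform hypothesis at $\eta/2$. Already for $f=\ln$, $\alpha=0$, $\varphi(t)=t$ this reduces to needing $\delta>\eta/2$, which nothing guarantees. So the induction's constants are circularly constrained and the step is a genuine gap, not just an omitted computation.

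The fix is to abandon that machinery entirely: the Kannan-type inequality gives Cauchyness directly. Put $x=x_n$, $y=x_m$ in \eqref{k1} to get
\begin{equation*}
\varphi(d(x_{n+1},x_{m+1}))<\tfrac{1}{2}\{\varphi(d(x_n,x_{n+1}))+\varphi(d(x_m,x_{m+1}))\}=\tfrac{1}{2}(s_n+s_m)\longrightarrow 0
\end{equation*}
as $n,m\to\infty$. Hence for any $\epsilon>0$ there is $N$ with $\varphi(d(x_n,x_m))<\varphi(\epsilon)$ for $n,m\geq N$, and since $\varphi$ is non-decreasing this forces $d(x_n,x_m)<\epsilon$; no use of $(D3)$, $(\mathcal{F}_2)$, or the uniform hypothesis is needed here. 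This is the paper's route (and the classical trick for Kannan maps: the contractive condition bounds $d(x_{n+1},x_{m+1})$ by consecutive-term quantities, so Cauchyness does not require summability of $\sum d(x_i,x_{i+1})$). Everything after that in your write-up — completeness, continuity of $T$, Hausdorffness for uniqueness of limits, and the uniqueness of the fixed point — goes through as you wrote it.
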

\begin{proof}
Let $x_0\in X$ be arbitrary but fixed. We consider the sequence $\{x_n\}$ in $X$, where $x_n = T^n x_0$ for all natural numbers $n$. Also, we take the sequence of real numbers $\{s_n\}$ defined by $s_n=\varphi(d(x_n,x_{n+1}))$ for all $n \in \mathbb{N}$.

If $x_n=x_{n+1}$ for some $n$, then it is easily noticeable that $x_n$ is a fixed point of $T$. So now we assume that $x_n\neq x_{n+1}$ for all $n \in \mathbb{N}$. Putting $x=x_n,~y=x_{n+1}$ in \eqref{k1} we get,
\begin{align*}
\varphi(d(Tx_n,Tx_{n+1}))&<\frac{1}{2}\{\varphi(d(x_{n},Tx_{n}))+\varphi(d(x_{n+1},Tx_{n+1}))\}\\
\Rightarrow \varphi(d(x_{n+1},x_{n+2}))&<\frac{1}{2}\{\varphi(d(x_{n},x_{n+1}))+\varphi(d(x_{n+1},x_{n+2}))\}\\
\Rightarrow s_{n+1} &< \frac{1}{2}\{s_n + s_{n+1}\}\\
\Rightarrow s_{n+1} &< s_n.
\end{align*}
Therefore, $\{s_n\}$ is a decreasing sequence of non-negative real numbers and hence convergent to some $a\geq 0$. We claim that $a=0$. If $a>0$, then by the given condition there exists $\delta>0$ such that 
\begin{equation} \label{k2}
\varphi(d(T^ix,T^jx))<a + \delta \Rightarrow \varphi(d(T^{i+1}x,T^{j+1}x))\leq a 
\end{equation}
  for any $i,j\in \mathbb{N}$. But since $\{s_n\}$ converges to $a$, there exists $n\in \mathbb{N}$ such that $$s_n < a+ \delta.$$ Then using \eqref{k2} we get, $$s_{n+1} \leq a,$$ which contradicts the fact that $\{s_n\}$ converges to $a$. Therefore, $\{s_n\}$ converges to $0$.
  
  Now, we show that $\{x_n\}$ is a Cauchy sequence. To show this, we put $x=x_n,y=x_m$ in \eqref{k1} and get,
\begin{align*}
\varphi(d(Tx_n,Tx_m))&<  \frac{1}{2}\{\varphi(d(x_n,Tx_n))+\varphi(d(x_m,Tx_m))\}\\
\Rightarrow \varphi(d(x_{n+1},x_{m+1}))&<  \frac{1}{2}\{s_n +s_m\} \to 0 ~~ \mbox{when}~~ n,m \to \infty. 
\end{align*}
Therefore, the double sequence $ \{\varphi(d(x_{n},x_{m}))\}$  of real numbers converges to $0$. So for any $\epsilon >0$, there exists $N \in \mathbb{N}$ such that 
$$\varphi(d(x_{n},x_{m})) < \varphi(\epsilon)$$ for all $n,m \geq N$, which gives
$$d(x_{n},x_{m}) < \epsilon$$ for all $n,m \geq N$. Therefore $\{x_n\}$ is a Cauchy sequence in $X$. Again, as $X$ is $\mathcal{F}$-complete,  $\{x_n\}$  converges to some $z \in X$. Since, $T$ is continuous, $\{Tx_n\}$ converges to $Tz$, i.e., $\{x_n\}$  converges to $Tz$. So we have $z=Tz$, i.e., $z$ is a fixed point $T$. The uniqueness of the fixed point can be similarly derived as in Theorem \ref{t3}.
\end{proof}
%\begin{example}
%Let, $e_i$ be the sequence of real numbers whose $i$-th term is $1$ and all other terms are $0$. Take, $X=\{e_i: i\geq 1\}$. Define $d: X \times X \to \mathbb{R}$ by 
%\[    d(x,y) = \left\{\begin{array}{ll}
%        1+\left|\frac{1}{\sup_i |e_i| }-\frac{1}{\sup_i |e_j|}\right|, & \text{if } x=e_i~and ~ y=e_j~ with~ x\neq y;\\
%       0, & \text{if } x=y.
%        \end{array}\right.  \]
%Then it is easy to note that $(X,d)$ is an $\mathcal{F}$-metric space with $f(t)=\ln t$ and $\alpha=0$. Also, it is clear that $(X,d)$ is $\mathcal{F}$-complete but not $\mathcal{F}$-compact.
%
%Now, define a mapping $T: X \to X$ by $$T(e_i)=e_{3i}$$ for all $e_i \in X$. Therefore, for $i < j$, we have 
%\begin{align*}
%d(Te_i,Te_j)&= 1+\left|\frac{1}{3i}-\frac{1}{3j}\right|\\
%&= 1+\frac{1}{3i}-\frac{1}{3j}< 1+\frac{1}{3i};
%\end{align*}
%whereas, 
%\begin{align*}
%\frac{1}{2}\{d(e_i,Te_i)+d(e_j,Te_j)\}&=\frac{1}{2}\left\{1+\left|\frac{1}{i}-\frac{1}{3i}\right|+1+\left|\frac{1}{j}-\frac{1}{3j}\right|\right\}\\
%&=1+\frac{1}{3i}+\frac{1}{3j}>1+\frac{1}{3i}.
%\end{align*}
%So, $d(Te_i,Te_j)<\frac{1}{2}\{d(e_i,Te_i)+d(e_j,Te_j)\}.$ In a similar manner, we can show that $d(Te_i,Te_j)<\frac{1}{2}\{d(e_i,Te_i)+d(e_j,Te_j)\}$ if $i>j$. Therefore, $$d(Tx,Ty)< \frac{1}{2}\{d(x,Tx) + d(y,Ty)\}$$ for all $x, y \in X$ with $x \neq y$ but $T$ has no fixed point.
%\end{example}
\begin{Acknowledgement}
{\noindent This research is funded by Council of Scientific and Industrial Research (CSIR), Government of India under Grant Number: $25(0285)/18/EMR-II$.}
\end{Acknowledgement}
\bibliographystyle{plain}

\begin{thebibliography}{10}

\bibitem{B1}
S.~Banach.
\newblock Sur les op\'erations dans les ensembles abstraits et leur application
  aux \'equations int\'egrales.
\newblock {\em Fund. Math.}, 3:133--181, 1922.

\bibitem{CMDK}
A.~Chanda, S.~Mondal, L.K. Dey, and S.~Karmakar.
\newblock ${C}^*$-algebra-valued contractive mappings with its application to
  integral equations.
\newblock {\em Indian J. Math.}, 59(1):107--124, 2017.

\bibitem{C}
B.S. Choudhury.
\newblock Unique fixed point theorem for weakly {C}-contractive mappings.
\newblock {\em Kathmandu Univ. J. Sci. Engg. Tech.}, 5(1):6--13, 2009.

\bibitem{DD}
P.~Das and L.K. Dey.
\newblock Fixed point of contractive mappings in generalized metric spaces.
\newblock {\em Math. Slovaca}, 59(4):499--504, 2009.

\bibitem{D3}
B.C. Dhage.
\newblock Generalized metric spaces and topological structure {I}.
\newblock {\em An. \c{S}tiin\c{t}. Univ. Al. I. Cuza Ia\c{s}i. Mat. (N.S.)},
  24:1--22, 2000.

\bibitem{E1}
M.~Edelstein.
\newblock On fixed and periodic points under contractive mappings.
\newblock {\em J. Lond. Math. Soc.}, 37(1):74--79, 1962.

\bibitem{GDMR}
H.~Garai, L.K. Dey, P.~Mondal, and S.~Radenovi'{c}.
\newblock Nemytzki-{E}delstein-{S}uzuki type results in $b_{v}(s)$-metric
  spaces.
\newblock Preprint.

\bibitem{GRRS}
R.~George, S.~Radenovi\'{c}, K.P. Reshma, and S.~Shukla.
\newblock Rectangular $b$-metric space and contraction principles.
\newblock {\em J. Nonlinear Sci. Appl.}, 8(6):1005--1013, 2015.

\bibitem{G}
M.~Geraghty.
\newblock On contractive mappings.
\newblock {\em Proc. Amer. Math. Soc.}, 40(2):604--608, 1973.

\bibitem{HS}
P.~Hitzler and A.K. Seda.
\newblock Dislocated topologies.
\newblock {\em J. Electr. Eng.}, 51(12):3--7, 2000.

\bibitem{JS1}
M.~Jleli and B.~Samet.
\newblock On a new generalization of metric spaces.
\newblock {\em ar{X}iv:1803.00240 [math.{GN}]}.

\bibitem{JS}
M.~Jleli and B.~Samet.
\newblock A generalized metric space and related fixed point theorems.
\newblock {\em Fixed Point Theory Appl.}, 2015:61, 2015.

\bibitem{R5}
R.~Kannan.
\newblock Some results on fixed points- {II}.
\newblock {\em Amer. Math. Monthly}, 76:405--408, 1969.

\bibitem{KSS}
M.S. Khan, M.~Swalech, and S.~Sessa.
\newblock Fixed point theorems by altering distances between the points.
\newblock {\em Bull. Austral. Math. Soc.}, 30(1):1--9, 1984.

\bibitem{M2}
J.R. Munkres.
\newblock {\em Topology}.
\newblock Pearson, 1974.

\bibitem{NA1}
A.D. Nezhad and Z.~Aral.
\newblock The topology of {GB}-metric spaces.
\newblock {\em ISRN Math. Anal.}, 2011, 2011.
\newblock Article ID 523453.

\bibitem{S3}
T.~Suzuki.
\newblock A new type of fixed point theorem in metric spaces.
\newblock {\em Nonlinear Anal. Model. Control}, 71(11):5313--5317, 2009.

\bibitem{TFAS}
M.~Tavakoli, A.P. Farajzadeh, T.~Abdeljawad, and S.~Suantai.
\newblock Some notes on cone metric spaces.
\newblock {\em Thai J. Math.}, 16(1):229--242, 2018.

\end{thebibliography}

\end{document}